\theoremstyle{plain}
\newtheorem{theorem}{Theorem}[section]
\newtheorem{prop}[theorem]{Proposition}
\theoremstyle{definition} 
\newtheorem{defn}[theorem]{Definition}
\newtheorem{remark}[theorem]{Remark}
\newcommand*{\bigchi}{\mbox{\Large$\chi$}}
\DeclareMathOperator{\Hol}{Hol}
\title{On a class of shift-invariant subspaces of the Drury-Arveson space}
\author{{Nicola Arcozzi}, {Matteo Levi}}
\date{}
\begin{document}

\maketitle

\begin{abstract}
In the Drury-Arveson space, we consider the subspace of functions whose Taylor coefficients are supported in the complement of a set $Y\subset\mathbb{N}^d$ with the property that $Y+e_j\subset Y$ for all $j=1,\dots,d$. This is an easy example of shift-invariant subspace, which can be considered as a RKHS in is own right, with a kernel that can be explicitely calculated. Every such a space can be seen as an intersection of kernels of Hankel operators with explicit symbols. Finally, this is the right space on which Drury's inequality can be optimally adapted to a sub-family of the commuting and contractive operators originally considered by Drury.
\end{abstract}

\section{Introduction}
We begin by fixing some notation and delimiting the framework we work in. Let $H$ be an abstract Hilbert space and for $d\geq 2$ consider a $d$-tuple of operators $A=(A_1,\dots,A_d):H \to H^d$. It is not difficult to see that the formal adjoint operator $A^*:H^d\to H$  acts as follows
\begin{equation*}
A^*k=\sum_j A^*_j k_j ,\quad \mbox{for} \ k=(k_1,\dots,k_d)\in H^d.
\end{equation*}
Given a polynomial $Q$ in $d$ variables, say $Q(z)=\sum_{k} c_k z^k$, where $z=(z_1,\dots,z_d)$, $k\in \mathbb{N}^d$ and the sum is finite, we write $Q(A)$ for the operator from $H$ to itself given by
\begin{equation*}
Q(A)=\sum_kc_kA^k=\sum_kc_kA_1^{k_1}\dots A_d^{k_d}.
\end{equation*}
Following Drury, we will relate $A$ to an operator acting on a Hilbert space of holomorphic functions of several variables on the unit ball. We write $\mathbb{B}^d$ for the open unit ball $\lbrace z=(z_1,\dots,z_d)\in\mathbb{C}^d \ : \ |z|<1\rbrace$, where $|z|^2:=\sum_{j=1}^d |z_j|^2$. Assuming that multiplication by $z_j$ defines a bounded linear operator (and it does on the spaces we are dealing with), on such a space we can consider a very natural $d$-tuple of operators, namely the $d$-shift
\begin{equation*}
M_z=(M_1,\dots,M_d):H\to H^d,
\end{equation*} 
where $M_j:f(z)\mapsto z_jf(z)$.\par
\begin{defn}
The \textit{Drury-Arveson space} is the space $H_d$ of functions $f(z)=\sum_{n\in \mathbb{N}^d}a(n)z^n$ holomorphic on the unit ball $\mathbb{B}^d\subset\mathbb{C}^d$, such that
\begin{equation*}
\Vert f\Vert_{H_d}^2:=\sum_{n\in \mathbb{N}^d}|a(n)|^2\beta(n)^{-1}<\infty,
\end{equation*}
where the weight function $\beta:\mathbb{N}^d\to \mathbb{N}$ is given by $\beta(n)=|n|!/n!$.\par
\end{defn}
This space has a reproducing kernel. For $f\in H_d$ and $z\in \mathbb{B}^d$, we have
\begin{equation*}
f(z)=\sum_{n}a_nz^n=\sum_{n}a_n\frac{z^n}{\beta(n)}\beta(n)=\langle f,k_z\rangle_{H_d},
\end{equation*}
with $k_z(w)=\sum_n \beta(n)\overline{z}^n w^n$ for $w\in\mathbb{D}$.\par
The series can be explicitly calculated and we get
\begin{equation*}
k_z(w)=\sum_{n\in\mathbb{N^d}} \beta(n)\overline{z}^n w^n=\sum_{k\geq 0}\sum_{|n|=k} \binom{k}{n}\overline{z}^n w^n=\sum_{k\geq 0}\Big(\sum_{j=1}^d \overline{z_j} w_j\Big)^k=\sum_{k\geq 0}\Big(\overline{z}\cdot w\Big)^k=\frac{1}{1-\overline{z}\cdot w}.
\end{equation*}
This function space was first introduced by Drury in \cite{drury}, then further developed in \cite{arv}. See also \cite{shalit}. It naturally arises as the right space to consider when trying to generalize to tuples of commuting operators a notable result by Von Neumann, saying that for any linear contraction $A$ on a Hilbert space and any complex polinomial $Q$, it holds
\begin{equation*}
\Vert Q(A) \Vert\leq \Vert Q\Vert_{\mathcal{M}(H^2)},
\end{equation*}
where $\mathcal{M}(H^2)=H^{\infty}$ denotes the multiplier space of the Hardy space of the unit disc $H^2$.\par
In fact, Drury shows that for a $d$-tuples of operators $A=(A_1,\dots,A_d):H \to H^d$, $d\geq 2$, such that $[ A_i,A_j]=0$ and $\Vert A \Vert\leq 1$, it holds
\begin{equation*}
\Vert Q(A) \Vert\leq \Vert Q\Vert_{\mathcal{M}(H_d)}.
\end{equation*}
The map $T$ given by
\begin{equation*}
(Tg)(z):=\sum_{n\in \mathbb{N}^d} g(n)\beta(n)z^n,
\end{equation*}
defines an isometric isomorphism from $\ell^2(\mathbb{N}^d,\beta)$ to $H_d$. This correspondence in particular tells us that the shift operator on $\ell^2(\mathbb{N}^d,\beta)$, given by
\begin{equation*}
S_j g(n)=\bigchi_{\mathbb{N}^d+e_j}(n)g(n-e_j)\beta(n-e_j)\beta(n)^{-1},
\end{equation*}
and the multiplication operator  $M_j$ on $H_d$ are unitarily equivalent, i.e. it turns out that $M_jT=TS_j$ for all $j=1,\dots,d$.

\section{A class of shift invariant subspaces of $H_d$}
We are interested in considering subspaces of $H_d$ of functions having Taylor coefficients with a prescribed support. Given some subset $X$ of $\mathbb{N}^d$, we write $\ell^2(X,\beta)$ for the closed subspace of $\ell^2(\mathbb{N}^d,\beta)$ of functions supported in $X$. We say that a set $X\subseteq \mathbb{N}^d$ is \textit{monotone}, if its complement in $\mathbb{N}^d$ is shift invariant, namely 
\begin{equation}\label{property}
\mathbb{N}^d\setminus X+e_j\subset \mathbb{N}^d\setminus X \qquad \mbox{for all} \ j=1,\dots,d,
\end{equation}
where $\mathbb{N}^d\setminus X$ is the complement of $X$ in $\mathbb{N}^d$. In all what follows we always consider $X$ to be a monotone set.\par
Given $g\in\ell^2(\mathbb{N}^d\setminus X,\beta)$, for $n \in X$ we have $S_jg(n)=0$ since $n-e_j\in X$ as well. Therefore $\ell^2(\mathbb{N}^d\setminus X,\beta)$ is a shift-invariant subspace of $\ell^2(\mathbb{N}^d,\beta)$. To any such a set $X$, we can associate the space $H_d(X)$ of functions of $H_d$ whose Taylor coefficients vanish on $\mathbb{N}^d\setminus X$. Since $M_jT\ell^2(\mathbb{N}^d\setminus X,\beta)=TS_j\ell^2(\mathbb{N}^d\setminus X,\beta)$, it follows that $H_d(\mathbb{N}^d\setminus X)$ is a shift-invariant subspace of $H_d$.\par
We can construct compressions of tuples of operators to the subspaces associated to the monotone set $X$.\par
In particular, let $B_j=S_j^*$ denote the backwards shift operator on $\ell^2(\mathbb{N}^d,\beta)$, given by $B_jg(n)=g(n+e_j)$. 
We consider the $d$-tuple of operators
\begin{equation*}
B^X=(B_1^X,\dots, B_d^X):\ell^2(X,\beta)\to\ell^2(X,\beta)^d,
\end{equation*}
 where for each $j=1,\dots,d$,
\begin{equation*}
B_j^X=P_XB_j\big|_{\ell^2(X,\beta)},
\end{equation*}
being $P_X$ the orthogonal projection of $\ell^2(\mathbb{N}^d,\beta)$ onto $\ell^2(X,\beta)$. In other words, $B_j^X$ is the compression of the standard $j^{th}$-backwards shift operator $B_j$ to $\ell^2(X,\beta)$.\par
Observe that the adjoint of $B^X$ is a \textit{row contraction} from $\ell^2(X,\beta)^d$ to $\ell^2(X,\beta)$,
\begin{equation*}
(B^X)^*(g_1,\dots,g_d)=\sum_j (B_j^X)^*g_j.
\end{equation*}
In the same way, we write $M_z^X$ for the compressed $d$-tuple $(M^X_1,\dots,M^X_d)$, where
\begin{equation*}
M_j^X=P_XM_j\big|_{H_d(X)},
\end{equation*}
$P_X$ being in this context the orthogonal projection from $H_d$ onto $H_d(X)$.

\section{Hankel operators and shift invariant subspaces}

Shift-invariant subspaces for the Drury-Arveson space are characterized in \cite{sunkes}, where it is shown that they can be represented as intersections of countably many kernels of Hankel operators, to be defined shortly. See also the PhD thesis \cite{sunkes2016}.\par
Consider a Hilbert space $\mathcal{H}$ of holomorphic functions on the unit ball $\mathbb{B}^d$, such that functions holomorphic on $\overline{\mathbb{B}^d}$ are dense in it. The function $b\in \mathcal{H}$ is a symbol if there exists $C>0$ such that
\begin{equation*}
|\langle f g, b\rangle_{\mathcal{H}}|\leq C\Vert f\Vert_{\mathcal{H}}\Vert g\Vert_{\mathcal{H}} \qquad \mbox{for all} \ f,g \in \Hol(\overline{\mathbb{B}^d}).
\end{equation*}
Endowing the space $\overline{\mathcal{H}}:=\lbrace \bar{f}:f\in\mathcal{H}\rbrace$ with the inner product $\langle \bar{f},\bar{g}\rangle_{\overline{\mathcal{H}}}:=\langle g,f\rangle_{\mathcal{H}}$, we say that $H_b:\mathcal{H} \to \overline{\mathcal{H}}$ is a Hankel operator with symbol $b\in \mathcal{H}$ if there exists $C>0$ such that
\begin{equation*}
\langle H_b f, \bar{g}\rangle_{\overline{\mathcal{H}}}=\langle f g, b\rangle_{\mathcal{H}}\qquad \mbox{for} \ f,g \in \Hol(\overline{\mathbb{B}^d}).
\end{equation*}

On $H_d$, consider the Hankel operator with symbol $b(z)=z^m$, for some $m\in\mathbb{N}^d$. We have $f \in \ker H_b$ iff $\langle f g, b\rangle=0$ for all $g \in \Hol(\overline{\mathbb{B}^d})$. Since,
\begin{equation*}
\langle f g, b\rangle_{H_d}=\widehat{fg}(m)\beta(m)=\big(\sum_{n,k}\widehat{f}(k)\widehat{g}(n)z^{n+k}\big)^{\wedge}(m)\beta(m)=\beta(m)\sum_{k}\widehat{f}(k)\widehat{g}(m-k),
\end{equation*}
it follows that $f \in \ker H_b$ iff $\widehat{f}(k)=0$ for $k\leq m$, i.e. $\widehat{f}\equiv 0$ on the rectangle $R_m=\lbrace n \in \mathbb{N}^d: n_j\leq m_j \ \forall j\rbrace$. Hence, $f\in H_d(\mathbb{N}^d\setminus X)$ with $X=R_m$. This is the easiest example of shift-invariant subspace of the Drury-Arveson space with explicit symbol.\par
Actually, each set $X$ satisfying (\ref{property}) can be associated to a collection of Hankel symbols. Observe that $X$ is bounded if and only if for all $j$ there exists $n \in \mathbb{N}^d\setminus X$ such that $n\in\mathbb{N}e_j$. In such a case, $X$ is a finite union of rectangles, $X=\bigcup_{k=1,\dots, K}R_{m_k}$ and hence,
\begin{equation*}
H_d(\mathbb{N}^d\setminus X)=\bigcap_{k=1,\dots, K}\ker H_{z^{m_k}}.
\end{equation*}
If $X$ is unbounded, then for every $j$ such that $\mathbb{N}^d\setminus X\cap\mathbb{N}e_j=\emptyset$, we have an increasing sequence of rectangles covering the strip unbounded in the $j-th$ direction. Summing up, it follows that
\begin{equation*}
H_d(\mathbb{N}^d\setminus X)=\bigcap_{k=1}^{\infty}\ker H_{z^{m_k}}.
\end{equation*}

\section{Drury type inequality}
In the introduction we have defined polynomials valued on operators, $Q(A)$. The concept of operators being variables of functions can be properly extended. Following Nagy and Foias \cite{nagy}, given a contraction $A$ on a Hilbert space $H$ one can define the holomorphic functional calculus
\begin{equation*}
\varphi(A):=\sum_k c_k A^k,
\end{equation*}
whenever $\varphi\in \mathcal{A}:=\lbrace a(z)=\sum_k c_kz^k : \ a\in \Hol(\mathbb{D}), a \ \mbox{continuous on} \ \mathbb{\overline{D}}, (c_k)\in \ell^{\infty} \rbrace$.\par
Now, for any $\varphi \in \Hol(\mathbb{D})$, the function $\varphi_r(\cdot):=\varphi(r\cdot)$ is in the class $\mathcal{A}$ for $r\in (0,1)$. Moreover, if $\varphi\in H^{\infty}$, we have the uniform bound $|\varphi_r(z)|\leq\Vert \varphi\Vert_{\infty}$, for $z\in\mathbb{D}$, $0<r<1$.
Hence, for every $\varphi\in H^{\infty}$ it can be defined the functional calculus
\begin{equation*}
\varphi(A)=\lim_{r\to1^-}\varphi_r(A),
\end{equation*}
whenever the above limit exists in the strong operator topology, which is always the case when $A$ is a completely non-unitary contraction (see \cite{nagy}).\par
In particular, for $\varphi\in \mathcal{M}(H_d)\subset H^{\infty}$ and $A=M_z$, we can define the operator of multiplication by $\varphi$ via the functional calculus
\begin{equation}\label{limite}
M_{\varphi}=\varphi(M_z)=\lim_{r\to1^-}\varphi_r(M_z).
\end{equation}
This defines a bounded operator from $H_d$ to itself, and its adjoint is clearly given by $(M_{\varphi})^*=\lim_{r\to1^-}(\varphi_r(M_z))^*$.\par
We have the following version of Drury's inequality.
\begin{theorem}
Let $H$ be an abstract Hilbert space and $A=(A_1,\dots,A_d):H \to H^d$, $d\geq 2$ a $d$-tuple of operators such that
\begin{itemize}
\item[(i)] $A_iA_j=A_jA_i \qquad \mbox{for} \ i,j=1,\dots,d$.
\item[(ii)] $\Vert Ah\Vert_{H^d}\leq \Vert h\Vert_H \qquad \mbox{for all} \ h\in H$.
\end{itemize}
Let $X$ be the complement in $\mathbb{N}^d$ of the set $N:=\lbrace n \in \mathbb{N}^d : A^n=0\rbrace$. Then for every complex polynomial $Q$ of $d$ variables, we have
\begin{equation}\label{inequality2}
\Vert Q(A)\Vert\leq\Vert Q(B^X)\Vert\leq \inf\lbrace \Vert \varphi\Vert_{\mathcal{M}(H_d)}: \varphi\in \mathcal{M}(H_d), \varphi(M_z^X)=Q(M_z^X) \rbrace.
\end{equation}

\end{theorem}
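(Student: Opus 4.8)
The plan is to prove the two inequalities separately, after reducing the middle quantity to $\|Q(M_z^X)\|$, where $Q(M_z^X):=P_XM_Q|_{H_d(X)}$. Two preliminaries. First, $N:=\{n:A^n=0\}$ is monotone, since $A^n=0$ forces $A^{n+e_j}=A^nA_j=0$ by (i); hence $X=\mathbb{N}^d\setminus N$ is monotone, the constructions of Section~2 apply, $H_d(N)=H_d(\mathbb{N}^d\setminus X)$ is $M_z$-invariant, and $H_d(X)$ is co-invariant, hence semi-invariant for the algebra generated by $M_1,\dots,M_d$. By Sarason's lemma the compression $R\mapsto P_XR|_{H_d(X)}$ is multiplicative on that algebra, so the $M_j^X$ commute and $Q(M_z^X)=\sum_kc_k(M_1^X)^{k_1}\cdots(M_d^X)^{k_d}=P_XM_Q|_{H_d(X)}$; likewise $B_j^X=B_j|_{\ell^2(X,\beta)}$ (the projection being redundant since $\ell^2(X,\beta)$ is $B_j$-invariant), the $B_j^X$ commute, and $Q(B^X)=Q(B)|_{\ell^2(X,\beta)}$. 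Second, $T|_{\ell^2(X,\beta)}\colon\ell^2(X,\beta)\to H_d(X)$ is a unitary intertwining $B_j^X$ with $(M_j^X)^*$, whence (using that the $M_j^X$ commute) $\|Q(B^X)\|=\|\bar Q(M_z^X)\|$; and since coefficient conjugation is an antilinear isometry of $H_d(X)$ commuting with each $M_j^X$, the right-hand side equals $\|Q(M_z^X)\|$. So $\|Q(B^X)\|=\|Q(M_z^X)\|$ for every polynomial $Q$.

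For the first inequality, assume first that $A$ is \emph{pure}, i.e. $\Psi^k(I)\to0$ strongly, where $\Psi(Y):=\sum_jA_j^*YA_j$. The (transported) Drury--Arveson model provides
\begin{equation*}
V\colon H\longrightarrow\ell^2(\mathbb{N}^d,\beta)\otimes\mathcal D,\qquad\mathcal D:=\overline{(I-A^*A)^{1/2}H},\qquad Vh=\sum_{n\in\mathbb{N}^d}\delta_n\otimes(I-A^*A)^{1/2}A^nh.
\end{equation*}
Using $A^*A=\Psi(I)$ and $\sum_{|n|=k}\beta(n)A^{*n}YA^n=\Psi^k(Y)$, one computes $\|Vh\|^2=\sum_k\langle\Psi^k(I-\Psi(I))h,h\rangle=\langle(I-\lim_k\Psi^k(I))h,h\rangle=\|h\|^2$, so $V$ is isometric; a direct computation gives $VA_j=(B_j\otimes I)V$ for all $j$. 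Hence $V(H)$ is invariant under each $B_j\otimes I$, $A_j$ is unitarily equivalent to $(B_j\otimes I)|_{V(H)}$, and (the $B_j\otimes I$ commuting) $Q(A)\cong(Q(B)\otimes I)|_{V(H)}$. Now bring in the hypothesis: for $n\in N$, $(B^n\otimes I)V=VA^n=0$, so $V(H)\subseteq\ker(B^n\otimes I)$; a vector $\sum_m\delta_m\otimes\xi_m$ lies in $\ker(B^n\otimes I)$ exactly when $\xi_m=0$ for all $m\geq n$, so intersecting over $n\in N$ and using $N+\mathbb{N}^d=N$ yields $V(H)\subseteq\ell^2(X,\beta)\otimes\mathcal D$. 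Since $\ell^2(X,\beta)$ is $B_j$-invariant, $(Q(B)\otimes I)$ restricts there to $Q(B^X)\otimes I$, and therefore $\|Q(A)\|=\|(Q(B^X)\otimes I)|_{V(H)}\|\leq\|Q(B^X)\|$. For a general $A$ satisfying (i)--(ii), the tuple $rA$ ($0<r<1$) is pure, because $\Psi_{rA}^k(I)=r^{2k}\Psi_A^k(I)\leq r^{2k}I\to0$, and has the same vanishing set $N$ (as $r\neq0$); the pure case gives $\|Q(rA)\|\leq\|Q(B^X)\|$, and letting $r\to1^-$, so that $Q(rA)=\sum_kc_kr^{|k|}A^k\to Q(A)$ in operator norm, gives $\|Q(A)\|\leq\|Q(B^X)\|$.

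For the second inequality, let $\varphi\in\mathcal M(H_d)$ with $\varphi(M_z^X)=Q(M_z^X)$. By the functional calculus (\ref{limite}) together with Sarason's lemma, $\varphi(M_z^X)=P_XM_\varphi|_{H_d(X)}$, so $\|\varphi(M_z^X)\|\leq\|M_\varphi\|=\|\varphi\|_{\mathcal M(H_d)}$. Combining with the first paragraph, $\|Q(B^X)\|=\|Q(M_z^X)\|=\|\varphi(M_z^X)\|\leq\|\varphi\|_{\mathcal M(H_d)}$; since the admissible set is nonempty ($\varphi=Q$ qualifies), taking the infimum over $\varphi$ gives the claim.

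The main obstacle is the first inequality, and the point that actually strengthens Drury's bound $\|Q(A)\|\leq\|Q\|_{\mathcal M(H_d)}$ is the observation that the relations $A^n=0$ on $N$ confine the model space $V(H)$ to $\ell^2(X,\beta)\otimes\mathcal D$ rather than the whole of $\ell^2(\mathbb{N}^d,\beta)\otimes\mathcal D$; once this is established the estimate is immediate by restriction to an invariant subspace. The supporting technical points---writing down the Drury--Arveson model $V$ with the correct weight $\beta(n)$ and checking it is an isometric intertwiner, the passage $rA\to A$ (which handles the general, possibly non-pure, case, so no discussion of spherical summands is needed), and the identification $\|Q(B^X)\|=\|Q(M_z^X)\|$ through $T$ and coefficient conjugation---are routine but must be carried out with some care.
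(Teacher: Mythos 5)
Your proposal is correct and follows essentially the same route as the paper: Drury's isometric dilation (your $V$ is the paper's $\theta$, written in tensor form), with the hypothesis $A^n=0$ on $N$ used precisely to confine the dilation space to $\ell^2(X,\beta)\otimes\mathcal{D}$, the reduction to strict contractions via $rA$, the transfer $\Vert Q(B^X)\Vert=\Vert Q(M_z^X)\Vert$ through $T$, and the multiplier bound via compression to the co-invariant subspace $H_d(X)$. You merely fill in some steps the paper cites or glosses over (purity of $rA$, the coefficient-conjugation point, Sarason's lemma for semi-invariance), so no substantive difference.
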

\begin{proof}
For $N=\emptyset$ we have $X=\mathbb{N}^d$ and this is just Drury's theorem, while for $N=\mathbb{N}^d\setminus\lbrace 0\rbrace$, $A$ reduces to a $d$-tuple of zeros (we set $0^0$ to be the identity). So, we suppose that $N$ (and hence $X$) is a proper subspace of $\mathbb{N}^d$.\par
It is enough to show that the theorem is true when $(ii)$ is replaced by the stronger condition
\begin{itemize}
\item[(ii)'] $\Vert Ah\Vert_{H^d}\leq r\Vert h\Vert_H \qquad \mbox{for all} \ h\in H$,
\end{itemize}
where $r\in(0,1)$.\par
We write $\widetilde{H}(X)$ for the space $\ell^2(X, \check{H},\beta)$, where $\check{H}$ has the same underlying space as $H$ but a different norm, $\Vert h\Vert_{\check{H}}=\Vert Dh\Vert_{H}$, where $D$ is the defect operator of $A$, $D=\sqrt{I-A^*A}$, (see \cite{drury} for the details). Drury constructs an injective isometry $\theta:H\to\widetilde{H}(\mathbb{N}^d)$, $\theta h(n):=A^nh$, and shows that $\widetilde{B}^m\theta=\theta A^m$ for all $m\in\mathbb{N}^d$ (here $\widetilde{B}$ is the $d$-tuple of backshifts on $\widetilde{H}(\mathbb{N}^d)$).\par
We rephrase this in our setting. Let $\pi_X$ be the orthogonal projection of $\widetilde{H}(\mathbb{N}^d)$ onto $\widetilde{H}(X)$, $\widetilde{B}_j^X:=\pi_X\widetilde{B}_j|_{\widetilde{H}(X)}$ and $\psi:=\pi_X\circ\theta$.\par
Since $\theta$ is an isometry, it is easy to see that that
\begin{equation}\label{isometry}
\psi \quad \mbox{is an isometry} \iff \theta h =0 \quad \mbox{on} \ \mathbb{N}^d\setminus X \iff A^n=0 \quad \mbox{for} \ n \in \mathbb{N}^d\setminus X.
\end{equation}
 We have
\begin{equation*}
\psi A_j=\pi_X\widetilde{B}_j\theta, \qquad  \mbox{and} \qquad \widetilde{B}^X_j\psi=\pi_X\widetilde{B}_j|_{\widetilde{H}(X)}\pi_X\theta=\pi_X\widetilde{B}_j\pi_X\theta.
\end{equation*}
For $n\in X$ and $h \in H$,
\begin{equation*}
(\widetilde{B}_j-\widetilde{B}_j\pi_X)\theta h (n)=\theta h (n+e_j)-\pi_X\theta h (n+e_j)=
\begin{cases}
 0 & n+e_j\in X\\
\theta h (n+e_j) & n+e_j\not\in X
\end{cases}
\end{equation*}
which equals zero by (\ref{isometry}). It follows that
\begin{equation*}
\psi A^m=(\widetilde{B}^X)^m\psi \qquad \mbox{for all} \ m\in\mathbb{N}.
\end{equation*}
At this point, it is standard (for example follow \cite{drury}) that for every complex polynomial $Q$ we have,
\begin{equation}\label{inequality}
\Vert Q(A)\Vert\leq\Vert Q(B^X)\Vert=\Vert Q(M_z^X)\Vert.
\end{equation}
The equality above follows from the intertwining relation $M_j^XT=T(B_j^X)^*$, where the operator $T$ in our case is the isometric isomorphism from $\ell^2(X,\beta)$ to $H_d(X)$ given by $(Tg)(z):=\sum_{n\in X} g(n)\beta(n)z^n$.

For $f(z)=\sum_na_nz^n \in H_d(X)$, we have $M_j^Xf(z)=\sum_{n\in X\cap X+e_j}a_{n-e_j}z^n$, and so
\begin{equation*}
\Vert M_j^Xf\Vert_{H_d}^2=\sum_{n\in X\cap X-e_j}|a_n|^2\beta(n+e_j)^{-1}\leq \sum_{n\in X\cap X-e_j}|a_n|^2\beta(n)^{-1}\leq \Vert f\Vert_{H_d}.
\end{equation*}
Then, all polynomials are multipliers for $H_d$ and
\begin{equation}\label{equa}
\Vert Q(M_z^X)\Vert=\Vert P_XQ(M_z)\Vert\leq\Vert Q(M_z)\Vert=\Vert Q\Vert_{\mathcal{M}(H_d)}.
\end{equation}
Of course, there are in general many functions $\varphi$ such that $P_X\varphi(M_z)=P_XQ(M_z)$. In particular, let $\varphi$ be a multiplier of $H_d$ such that $\widehat{\varphi(n)}=\widehat{Q(n)}$ for $n\in X$. Then, for any $g\in H_d$ we have,
\begin{equation*}\
\begin{split}
\Vert P_X Q(M_z)g-P_X\varphi(M_z)g\Vert_{H_d}&\leq\Vert P_X (Q(M_z)-\varphi_r(M_z))g\Vert_{H_d}+\Vert P_X( \varphi(M_z)-\varphi_r(M_z))g\Vert_{H_d}\\
&\leq \Vert\sum_X \widehat{\varphi(n)}(1-r^{|n|})M_z^ng\Vert_{H_d}+\Vert \varphi(M_z)g-\varphi_r(M_z)g\Vert_{H_d}\\
&\leq\sum_X(1-r^{|n|})\widehat{\varphi(n)} \Vert M_z^ng\Vert_{H_d}+\Vert \varphi(M_z)g-\varphi_r(M_z)g\Vert_{H_d}.
\end{split}
\end{equation*}
The term on the right goes to zero as $r\to1^-$, so it follows $P_X Q(M_z)=P_X\varphi(M_z)$. Then, (\ref{equa}) can be generalized as follows
\begin{equation*}
\Vert Q(M_z^X)\Vert=\Vert P_X\varphi(M_z)\Vert\leq\Vert \varphi(M_z)\Vert=\Vert \varphi\Vert_{\mathcal{M}(H_d)},
\end{equation*}
for any $\varphi \in \mathcal{M}(H_d)$ such that $\widehat{\varphi(n)}=\widehat{Q(n)}$. We have then proved that,
\begin{equation*}
\Vert Q(A)\Vert\leq\Vert Q(B^X)\Vert\leq \inf\lbrace \Vert \varphi\Vert_{\mathcal{M}(H_d)}: \varphi\in \mathcal{M}(H_d), \varphi(M_z^X)=Q(M_z^X) \rbrace.
\end{equation*}
\end{proof}
\begin{remark}
Observe that the first inequality in the theorem is optimal if the backshift $d$-tuple $B^X$ satisfies (i) and (ii) and if $\lbrace n \in \mathbb{N}^d : (B^X)^n=0\rbrace$ equals $N$. It is clear that condition (ii) holds for $B^X$, for every choice of $X$. Also, $n \in N \iff n+m \in N$ for all $m\in\mathbb{N}^d$ and since $N=\mathbb{N}^d\setminus X$ this is equivalent as asking $f(n+m)=0$ for all $m\in\mathbb{N}^d$, $f\in \ell^2(X,\beta)$. But $f(n+m)=(B^X)^nf(m)$ and so $\lbrace n \in \mathbb{N}^d : (B^X)^n=0\rbrace=N$.\par
On the other hand, the commuting property (i) is not fulfilled on most sets $X$. Of course, if $X$ is chosen such that $\ell^2(X,\beta)$ is backshift-invariant, then $B^X=B\big|_{\ell^2(X,\beta)}$ and (i) and (ii) hold, see \cite{drury}. More in general, doing standard calculations it is not hard to see that $B^X$ satisfies (i) if and only if
\begin{equation}\label{commutativity}
n,n+e_i+e_j,n+e_i \in X\implies n+e_j\in X, \quad \mbox{for} \ i,j=1,\dots,d.
\end{equation}
This is a shape-condition on the set $X$, saying that it cannot have any subset with one of the following configurations

\begin{figure}[H]
\centering
\includegraphics[scale=0.5]{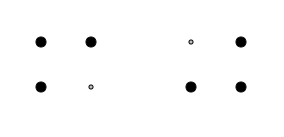}
\caption{Fat dots are elements of not permitted subsets of $X$.}
\end{figure}
It is clear that $X=N^C$ satisfies (\ref{commutativity}), since $n+e_j\in \mathbb{N}^d\setminus X$ for some $j$ would imply $n+e_j+e_i \in \mathbb{N}^d\setminus X$ for all $i=1,\dots,d$. It follows that the inequality in the theorem is optimal.
\end{remark}

\section{Further considerations}
We want to look closer at the inequality in (\ref{inequality2}). In particular, we are interested in understanding if it is an equality indeed. The reason to be optimistic in this sense comes from a theorem proved by Sarason in \cite{sar} (see also \cite[Theorem 3.1]{peller}) in the one-dimensional case, i.e. for the Hardy space. Let $K$ be a closed backshift-invariant subspace of the Hardy space $H^2$, and write $S_K$ for the compression of the shift operator to this subspace. Sarason proved the following.
\begin{theorem}
Let $T$ be an operator commuting with $S_K$. Then there exists a function $\varphi \in H^{\infty}$ such that $T=\varphi(S_K)$ and $\Vert T\Vert=\Vert \varphi\Vert_{H^{\infty}}$.
\end{theorem}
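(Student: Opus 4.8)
The plan is to reduce $T$ to the standard model of an operator on a backshift-invariant subspace, and then to recognize $\Vert T\Vert$ as a distance in $L^\infty$ to which Nehari's theorem applies. By Beurling's theorem a proper nonzero closed backshift-invariant subspace of $H^2$ is $K=K_\theta:=H^2\ominus\theta H^2$ for some inner function $\theta$ (for $K=H^2$ one recovers the classical identity $\mathcal M(H^2)=H^\infty$, and $K=0$ is vacuous). Then $K_\theta$ is invariant under $M_z^*$, the compressed shift $S_K=P_{K_\theta}M_z|_{K_\theta}$ is a completely non-unitary contraction, and since $K_\theta$ is the difference of the nested $M_z$-invariant subspaces $H^2\supseteq\theta H^2$ one has $S_K^n=P_{K_\theta}M_z^n|_{K_\theta}$ for all $n\ge0$; moreover $M_z$ on $H^2$ is the minimal isometric dilation of $S_K$. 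The $H^\infty$-functional calculus of Nagy and Foias then sends $\varphi\in H^\infty$ to $\varphi(S_K)=P_{K_\theta}M_\varphi|_{K_\theta}$, with $\Vert\varphi(S_K)\Vert\le\Vert\varphi\Vert_\infty$; the assignment is a unital algebra homomorphism, and $\theta(S_K)=0$ because $\theta f\in\theta H^2\perp K_\theta$ for $f\in K_\theta$. Hence $\varphi(S_K)$ depends only on the coset $\varphi+\theta H^\infty$.

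I would next show that every $T$ commuting with $S_K$ arises this way. The fastest route is to invoke the commutant lifting theorem of Nagy and Foias for the minimal isometric dilation $M_z$ of $S_K$: it yields a bounded operator $B$ on $H^2$ with $BM_z=M_zB$, $\Vert B\Vert=\Vert T\Vert$ and $P_{K_\theta}B|_{K_\theta}=T$, and since the commutant of $M_z$ on $H^2$ is exactly $\{M_\varphi:\varphi\in H^\infty\}$ with $\Vert M_\varphi\Vert=\Vert\varphi\Vert_\infty$, this already produces $\varphi\in H^\infty$ with $\varphi(S_K)=T$ and $\Vert\varphi\Vert_\infty=\Vert T\Vert$, proving the theorem. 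If one prefers to avoid commutant lifting, as in Sarason's original proof, one first produces by a direct use of the relation $TS_K=S_KT$ some $\varphi_0\in H^\infty$ with $T=\varphi_0(S_K)$; the admissible symbols are then exactly $\varphi_0+\theta H^\infty$, so that, multiplication by the unimodular $\bar\theta$ being isometric on $L^\infty$,
\[
\Vert T\Vert\;\le\;\inf_{h\in H^\infty}\Vert\varphi_0+\theta h\Vert_\infty\;=\;\operatorname{dist}_{L^\infty}\!\big(\bar\theta\varphi_0,H^\infty\big).
\]

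It then remains to show that this distance equals $\Vert T\Vert$ and is attained. By Nehari's theorem it equals $\Vert H_{\bar\theta\varphi_0}\Vert$, where $H_b f=P_-(bf)$ and $P_\pm$ are the Riesz projections of $L^2(\mathbb T)$ onto $H^2$ and onto $L^2(\mathbb T)\ominus H^2$, and the distance is attained at some $h\in H^\infty$. A short computation identifies $H_{\bar\theta\varphi_0}$: it annihilates $\theta H^2$ (for $f=\theta g$ one gets $\bar\theta\varphi_0 f=\varphi_0 g\in H^2$), and for $f\in K_\theta$ one has $\theta\cdot H_{\bar\theta\varphi_0}f=\varphi_0 f-\theta P_+(\bar\theta\varphi_0 f)$, which lies in $K_\theta$ (it is in $H^2$ and orthogonal to $\theta H^2$) and equals $P_{K_\theta}(\varphi_0 f)=Tf$; since $\theta$ is unimodular this gives $\Vert H_{\bar\theta\varphi_0}f\Vert=\Vert Tf\Vert$ for $f\in K_\theta$ and hence $\Vert H_{\bar\theta\varphi_0}\Vert=\Vert T\Vert$. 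Consequently all the inequalities collapse, and for a best approximant $h$ the symbol $\varphi:=\varphi_0+\theta h$ satisfies $\varphi(S_K)=\varphi_0(S_K)=T$ (as $\theta(S_K)=0$) and $\Vert\varphi\Vert_\infty=\Vert T\Vert$. The one substantial step---the main obstacle---is the norm equality, i.e.\ that compressing to $K_\theta$ costs nothing in operator norm; the essential input is the commutant lifting theorem, or, in the elementary route, Nehari's theorem together with the identification of $H_{\bar\theta\varphi_0}$ with $T$ on $K_\theta$.
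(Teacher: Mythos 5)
The paper does not prove this statement: it is Sarason's generalized interpolation theorem, quoted verbatim with a pointer to \cite{sar} and to \cite[Theorem 3.1]{peller}, so there is no in-paper argument to compare yours against. Judged on its own, your sketch is the standard proof and is essentially correct. The commutant-lifting route is complete as stated, granting two standard facts that you invoke without proof: that the unilateral shift on $H^2$ is the \emph{minimal} isometric dilation of $S_K$ on $K_\theta$ (one must check $\bigvee_{n\ge 0} z^nK_\theta=H^2$ for nonconstant inner $\theta$), and that the commutant of $M_z$ on $H^2$ is $\{M_\varphi:\varphi\in H^\infty\}$ with $\Vert M_\varphi\Vert=\Vert\varphi\Vert_\infty$. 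Your Nehari route is also sound --- the identification $\theta\, H_{\bar\theta\varphi_0}f=P_{K_\theta}(\varphi_0 f)=Tf$ on $K_\theta$, together with $H_{\bar\theta\varphi_0}\theta H^2=0$, does give $\Vert H_{\bar\theta\varphi_0}\Vert=\Vert T\Vert$, and attainment of the $L^\infty$-distance follows from weak-$*$ compactness --- but note that in that route the step you dismiss in one line, namely producing \emph{some} $\varphi_0\in H^\infty$ with $T=\varphi_0(S_K)$ from the bare relation $TS_K=S_KT$, is precisely the substantive content of Sarason's original argument; as written, that branch of your proof assumes what is hardest to prove. Since the first branch (via commutant lifting) already yields the theorem, the proposal stands, with the caveat that historically the implication ran the other way: Sarason's theorem motivated the Nagy--Foias commutant lifting theorem rather than being deduced from it.
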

Now, on $H_1=H^2$ the operator $T=Q(M_z^X)$ clearly commutes with $M_z^X$, so there exists a function $\varphi\in\mathcal{M}(H^2)=H^{\infty}$, possibly different from the polynomial $Q$, such that $T=\varphi(M_z^X)$ and $\Vert T\Vert=\Vert \varphi\Vert_{\mathcal{M}_{H_d}}$. Then, (\ref{inequality2}) would become
\begin{equation*}
\Vert Q(A)\Vert\leq\Vert Q(B^X)\Vert=\Vert Q(M^X)\Vert=\Vert \varphi\Vert_{\mathcal{M}(H_d)}.
\end{equation*}
So we have equality in the case $d=1$. For higher dimensions, we have the following generalized \textit{commutant lifting theorem} (see \cite[Theorem 5.1]{Ball2001}).
\begin{theorem}\label{Ball}
Let $k(z,w)$ be a nondegenerate positive kernel on a domain $\Omega$ such that $1/k$ has 1 positive square. Let $H(k)$ be the associated RKHS. Suppose that $W\subset H(k)$ is a $\star$-invariant subspace and that $T$ is a bounded linear contraction from $W$ to itself such that
\begin{equation}
T^*M_{\varphi}^*|_{W}=M_{\varphi}^*T^*,
\end{equation}
for all $\varphi\in\mathcal{M}(H(k))$. Then, there exists a a multiplier $\psi\in\mathcal{M}(k)$ such that $\Vert (M_{\psi})\Vert\leq 1$ and $(M_{\psi})^*|_{W}=T^*$.
\end{theorem}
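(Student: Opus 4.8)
\medskip

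\noindent\emph{Proof idea.} I would deduce the statement from the commutant lifting theorem for the Drury--Arveson space, using the hypothesis that $1/k$ has one positive square as the device that realizes $H(k)$ inside such a space. Since $k$ is nondegenerate and $1/k$ has one positive square, $k$ is a complete Nevanlinna--Pick kernel, so by the Agler--McCarthy realization theorem, after the standard normalization at a base point $z_0\in\Omega$ (replacing $k(z,w)$ by $k(z,w)/\bigl(k(z,z_0)\overline{k(w,z_0)}\bigr)$, which only conjugates all the operators in sight by a fixed unitary and preserves the hypotheses) we may assume $k(z,z_0)\equiv 1$ and
\begin{equation*}
k(z,w)=\frac{1}{1-\langle b(z),b(w)\rangle_{\mathcal E}}
\end{equation*}
for some Hilbert space $\mathcal E$ and a map $b:\Omega\to\mathcal E$ with $b(z_0)=0$ and $\|b(z)\|_{\mathcal E}<1$. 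Writing $H_{\mathcal E}$ for the Drury--Arveson space (of possibly infinite multiplicity) on the unit ball of $\mathcal E$, with kernel $k_{\mathcal E}(x,y)=(1-\langle x,y\rangle_{\mathcal E})^{-1}$, the identity $k(\lambda,\mu)=k_{\mathcal E}(b(\lambda),b(\mu))$ makes the composition map $C_b\colon g\mapsto g\circ b$ a coisometry from $H_{\mathcal E}$ onto $H(k)$; its adjoint $C_b^*$ identifies $H(k)$ isometrically with the closed span of $\{k^{\mathcal E}_{b(\lambda)}:\lambda\in\Omega\}$, a $\star$-invariant subspace of $H_{\mathcal E}$, and for every $u\in\mathcal M(H_{\mathcal E})$ one has $u\circ b\in\mathcal M(H(k))$, $M_{u\circ b}C_b=C_bM_u$, hence $M_{u\circ b}^*=M_u^*\big|_{H(k)}$.

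Next I would transport the data of the theorem. Viewing $W\subseteq H(k)\subseteq H_{\mathcal E}$: because $W$ is $\star$-invariant in $H(k)$, $H(k)$ is $\star$-invariant in $H_{\mathcal E}$, and each $M_u^*$ $(u\in\mathcal M(H_{\mathcal E}))$ acts on $H(k)$ as $M_{u\circ b}^*$ with $u\circ b\in\mathcal M(H(k))$, the subspace $W$ is $\star$-invariant in $H_{\mathcal E}$ and $M_u^*\big|_W=M_{u\circ b}^*\big|_W$ there. Applying the hypothesis $T^*M_{\varphi}^*\big|_W=M_{\varphi}^*T^*$ with $\varphi=u\circ b$ gives $T^*M_u^*\big|_W=M_u^*T^*$ for every $u\in\mathcal M(H_{\mathcal E})$. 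Thus $T$ is a contraction on the $\star$-invariant subspace $W$ of $H_{\mathcal E}$ commuting with all the compressed multiplication operators.

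Now I would invoke the commutant lifting theorem for the Drury--Arveson space (Ball--Trent--Vinnikov; equivalently, the Toeplitz--corona theorem together with the fact that the commutant of the shift tuple on $H_{\mathcal E}$ is exactly the multiplier algebra): there is $v\in\mathcal M(H_{\mathcal E})$ with $\|M_v\|\le 1$ and $M_v^*\big|_W=T^*$ (identifying $W$ with its image under $C_b^*$). Set $\psi:=v\circ b\in\mathcal M(H(k))$. From $M_\psi C_b=C_bM_v$ and $C_bC_b^*=I$ we get $M_\psi=C_bM_vC_b^*$, so $\|M_\psi\|\le\|M_v\|\le 1$; and for $g\in W$,
\begin{equation*}
M_\psi^*g=M_{v\circ b}^*g=\bigl(M_v^*\big|_{H(k)}\bigr)g=M_v^*g=T^*g,
\end{equation*}
so $M_\psi^*\big|_W=T^*$ as desired (undoing the normalization of the first paragraph only conjugates $\psi$ by the same fixed unitary).

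The genuinely non-elementary ingredient --- and the main obstacle --- is the commutant lifting theorem for the Drury--Arveson space used in the third step; its proof goes through a ``lurking isometry''/Nevanlinna--Pick argument in which the one-positive-square property is precisely what forces the lifted operator to be an honest multiplication operator rather than merely an element of the commutant of the multiplication tuple. Two secondary points require care: $\mathcal E$ may be infinite dimensional, so one needs the vector-multiplicity form of that theorem; and the realization must be set up (via the normalization) so that $b$ takes values in the \emph{open} ball of $\mathcal E$, which is what makes $g\circ b$ meaningful for every $g\in H_{\mathcal E}$ and $C_b$ well defined.
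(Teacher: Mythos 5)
This theorem is not proved in the paper at all: it is imported verbatim from the literature (it is Theorem 5.1 of the cited Ball--Trent--Vinnikov paper \cite{Ball2001}), so there is no in-paper argument to compare yours against. Judged on its own, your sketch follows the standard route by which this result is established: normalize the kernel at a base point, use the one-positive-square hypothesis and the Agler--McCarthy realization to embed $H(k)$ as a $\star$-invariant subspace of a (possibly infinite-multiplicity) Drury--Arveson space via the coisometry $g\mapsto g\circ b$, transport $W$ and $T$ there, lift by the Drury--Arveson commutant lifting theorem, and push the lifted multiplier back down as $\psi=v\circ b$. The bookkeeping steps you give ($M_{u\circ b}C_b=C_bM_u$, $C_bC_b^*=I$, hence $\Vert M_\psi\Vert\le\Vert M_v\Vert\le 1$ and $M_\psi^*|_W=T^*$) are all correct, and you rightly note that the hypothesis for all $\varphi\in\mathcal{M}(H(k))$ is only needed for the multipliers of the form $u\circ b$. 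The one thing to be clear-eyed about is that the entire analytic content of the theorem is concentrated in the step you invoke rather than prove, namely commutant lifting for the Drury--Arveson multiplier algebra over a $\star$-invariant subspace (Popescu, Davidson--Pitts, Ball--Trent--Vinnikov; proved by a lurking-isometry argument): as written, your proposal is a correct reduction of the general one-positive-square case to the Drury--Arveson case, not a self-contained proof. Your parenthetical that this lifting is ``equivalently'' the Toeplitz--corona theorem together with the identification of the commutant of the shift tuple is too glib --- in \cite{Ball2001} the Toeplitz--corona theorem is a consequence of commutant lifting, not a substitute for it --- so that aside should be dropped or replaced by a precise citation.
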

Asking that $1/k$ has 1 positive square means that the self adjoint matrix $\lbrace 1/k(z_i,z_j)\rbrace_{i,j=1}^N$ has exactly one positive eigenvalue, counted with multiplicity, for every finite set of disjoint points $\lbrace z_1,\cdots, z_N \rbrace\subset\mathbb{B}^d$. It is well known that the Drury-Arveson kernel has this property.\par
So, in order to apply the theorem, take $H_d$ as the RKHS and let $W=H^d(X)$. We have to show that $H_d(X)$ is $\star$\textit{-invariant}, i.e. that for every $\varphi\in \mathcal{M}(H_d)$ it holds $M^*_{\varphi}H^d(X)\subset H^d(X)$. Suppose that the multiplier function $\varphi$ has the power series expansion $\varphi(z)=\sum_n a_nz^n$. Then $\varphi_r(z)=\sum_n a_n(r)z^n$, where $a_n(r)=a_nr^{|n|}$. Using the fact that $(M_j^{n_j})^*=(M_j^*)^{n_j}$ and the uniform absolute convergence of the series, we get
\begin{equation}\label{formula}
(\varphi_r(M_z))^*=\Big(\sum_n a_n(r)M_z^n\Big)^*=\sum_n \overline{a_n(r)}\big(M_z^n\big)^*=\sum_n \overline{a_n(r)}\big((M_1^*)^{n_1},\dots,(M_d^*)^{n_d}\big).
\end{equation}
To prove the $\star$-invariance, thanks to (\ref{limite}) it is enough to show that $(\varphi_r(M_z))^*$ maps $H_d(X)$ in itself for all $r$, but this is immediate by (\ref{formula}), since $M_j^*$ does.\par
The operator $T=Q(M_z^X)=P_XM_Q$ maps continuously $H_d(X)$ to itself. Moreover we have,
\begin{equation*}
T^*M_{\varphi}^*|_{H_d(X)}=M_Q^*P_XM_{\varphi}^*|_{H_d(X)}=M_Q^*M_{\varphi}^*|_{H_d(X)}.
\end{equation*}
It follows that for $f\in H_d(X)$ it holds
\begin{equation*}
T^*M_{\varphi}^*f=M_Q^*M_{\varphi}^*f=(M_{\varphi}M_Q)^*f=(M_QM_{\varphi})^*f=M_{\varphi}^*M_Q^*f=M_{\varphi}^*M_Q^*P_Xf=M_{\varphi}^*T^*f.
\end{equation*}
Therefore, we have $T^*M_{\varphi}^*|_{H_d(X)}=M_{\varphi}^*T^*$.\par
Hence Theorem \ref{Ball} applies, and there exists a multiplier $\psi\in \mathcal{M}(H_d)$ such that and $(M_{\psi})^*|_{W}=(Q(M_z^X))^*$. In particular, it follows
\begin{equation}\label{suggestion}
\Vert Q(M_z^X)\Vert=\Vert M_{\psi} |_{H^d(X)}\Vert.
\end{equation}
\noindent \textbf{Question.}
Does this help in proving that equality holds in place of the second inequality in (\ref{inequality2}) for any dimension $d>1$?


\section{A closed formula for the reproducing kernel on slabs}
Let $X$ be some subset of $\mathbb{N}^d$ satisfying (\ref{property}). Clearly, the space $H_d(X)$ has a reproducing kernel $k^X(w,z)$ which is given by the orthogonal projection of the Drury-Arveson kernel onto $H_d(X)$, in the sense that
\begin{equation}\label{reproducing kernel}
k^X(w,z)=P_Xk(w,z)=P_Xk_z(w)=\sum_{n\in X} \beta(n)\overline{z}^n w^n.
\end{equation}


For some special choices of the set $X$ we are able to get a closed formula for the reproducing kernel in (\ref{reproducing kernel}). In particular, this can be done when $X$ is what we call a \textit{slab}, $\mathcal{S}_1=\lbrace n \in \mathbb{N}^d: n_1 =0,\dots,N_1\rbrace$.

\begin{prop}
For $X=\mathcal{S}_1$ it holds
\begin{equation}\label{slab}
k^{\mathcal{S}_1}(w,z)=\frac{1}{1-\overline{z}\cdot w}\Big(1-\frac{\overline{z}_1 w_1}{1-\overline{z}\cdot w+\overline{z}_1 w_1}\Big)^{N_1}.
\end{equation}
\end{prop}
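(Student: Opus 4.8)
The plan is to evaluate the defining series
\begin{equation*}
k^{\mathcal{S}_1}(w,z)=\sum_{n\in\mathcal{S}_1}\beta(n)\,\overline{z}^n w^n
\end{equation*}
directly, exploiting the only special feature of a slab: it constrains a single coordinate of the multi-index and leaves the others completely free. Put $a:=\overline{z}_1 w_1$ and $b:=\sum_{j=2}^{d}\overline{z}_j w_j$, so that $\overline{z}\cdot w=a+b$, $\ 1-\overline{z}\cdot w=1-a-b$, and $\ 1-\overline{z}\cdot w+\overline{z}_1 w_1=1-b$. For $z,w\in\mathbb{B}^d$ all the series below converge absolutely, so every rearrangement is legitimate.

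The first step is to slice the Drury--Arveson kernel according to the value $k$ of the first coordinate $n_1$. Writing a multi-index as $n=(k,n')$ with $n'\in\mathbb{N}^{d-1}$ and $|n'|=m$, one has $\beta(n)=\binom{k+m}{k}\binom{m}{n'}$. Summing over the $n'$ with $|n'|=m$ fixed collapses by the multinomial theorem to $\binom{k+m}{k}a^k b^m$, and summing this over $m\ge0$ via the negative binomial identity $\sum_{m\ge0}\binom{k+m}{k}x^m=(1-x)^{-(k+1)}$ shows that the contribution to $k_z(w)$ of all multi-indices with $n_1=k$ equals
\begin{equation*}
a^k\sum_{m\ge0}\binom{k+m}{k}b^m=\frac{a^k}{(1-b)^{k+1}}.
\end{equation*}
(Equivalently, this is nothing but the expansion $\frac{1}{1-a-b}=\frac{1}{1-b}\sum_{k\ge0}\bigl(\tfrac{a}{1-b}\bigr)^k$ read off term by term; it identifies $\tfrac{a^k}{(1-b)^{k+1}}$ with the orthogonal projection of $k_z$ onto the closed span of $\{w^n:n_1=k\}$, the monomials being pairwise orthogonal in $H_d$.)

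Next, by the slicing just obtained, the description $\mathcal{S}_1=\{n:0\le n_1\le N_1\}$, and formula (\ref{reproducing kernel}), projecting onto $H_d(\mathcal{S}_1)$ simply amounts to keeping $k=0,\dots,N_1$: with $t:=\tfrac{a}{1-b}$,
\begin{equation*}
k^{\mathcal{S}_1}(w,z)=\sum_{k=0}^{N_1}\frac{a^k}{(1-b)^{k+1}}=\frac{1}{1-b}\cdot\frac{1-t^{N_1+1}}{1-t}=\frac{1-t^{N_1+1}}{1-a-b}.
\end{equation*}
Substituting back $a=\overline{z}_1 w_1$, $1-a-b=1-\overline{z}\cdot w$, and $1-b=1-\overline{z}\cdot w+\overline{z}_1 w_1$ rewrites the right-hand side as (\ref{slab}).

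I do not expect a genuine obstacle: the argument is essentially bookkeeping. The two points that deserve a word of justification are the absolute convergence on $\mathbb{B}^d\times\mathbb{B}^d$ that legitimizes the rearrangement in the slicing step, and the observation that the ``slice'' produced there is indeed the orthogonal projection $P_{\{n_1=k\}}k_z$ (immediate from the orthogonality of the monomials in $H_d$). The geometric summation and the final algebraic rewriting are entirely routine.
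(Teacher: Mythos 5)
Your computation follows the same route as the paper's own proof: slice the kernel along the constrained coordinate $n_1=k$, sum out the free coordinates with the negative binomial identity, and finish with a finite geometric series. The only genuine difference is organizational: you treat general $d$ in a single pass by grouping the free multi-indices $n'$ with $|n'|=m$ and invoking the multinomial theorem, whereas the paper does $d=2$ explicitly and then runs an induction on $d$; your version is cleaner and avoids the induction entirely.

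However, the last sentence of your argument is false, and this is worth being precise about. You correctly obtain
\begin{equation*}
k^{\mathcal{S}_1}(w,z)=\frac{1-t^{N_1+1}}{1-a-b},\qquad t=\frac{a}{1-b}=\frac{\overline{z}_1 w_1}{1-\overline{z}\cdot w+\overline{z}_1 w_1},
\end{equation*}
but this does not ``rewrite as (\ref{slab})'': the right-hand side of (\ref{slab}) equals $\frac{(1-t)^{N_1}}{1-a-b}$, and $1-t^{N_1+1}\neq(1-t)^{N_1}$. Test $N_1=0$: then $\mathcal{S}_1=\{n:n_1=0\}$ and the kernel must be $(1-b)^{-1}$, the Drury--Arveson kernel in the remaining $d-1$ variables, which is what your formula gives, whereas (\ref{slab}) gives the full kernel $(1-\overline{z}\cdot w)^{-1}$. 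The value you derived is the correct one; the statement of the Proposition, and the paper's own proof, contain the same two slips (the geometric sum $\sum_{n_1=0}^{N_1}x^{n_1}$ is written with exponent $N_1$ instead of $N_1+1$, and then $1-x^{N_1}$ is silently replaced by $(1-x)^{N_1}$). The corrected closed form is
\begin{equation*}
k^{\mathcal{S}_1}(w,z)=\frac{1}{1-\overline{z}\cdot w}\left(1-\left(\frac{\overline{z}_1 w_1}{1-\overline{z}\cdot w+\overline{z}_1 w_1}\right)^{N_1+1}\right).
\end{equation*}
So keep your derivation as is, but end with this identity rather than forcing agreement with (\ref{slab}) as printed.
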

\begin{proof}
Set $t=\overline{z}w$. As a first step, suppose that $d=2$. Using the fact that for $ j,k\in\mathbb{N}$ it holds
\begin{equation*}
\sum_{j=0}^{\infty}\binom{j+k}{j}x^j=\frac{1}{(1-x)^{k+1}},
\end{equation*}
we get
\begin{equation*}
\begin{split}
k^X(w,z)=\sum_{n\in X} \beta(n)\overline{z}^n w^n&=\sum_{n_1=0}^{N_1} \sum_{n_2=0}^{\infty}\binom{n_1+n_2}{n_2}t_1^{n_1}t_2^{n_2}=\sum_{n_1=0}^{N_1} t_1^{n_1}\frac{1}{(1-t_2)^{n_1+1}}\\
&=\frac{1}{1-t_2}\sum_{n_1=0}^{N_1}\Big(\frac{ t_1}{1-t_2}\Big)^{n_1}= \frac{1}{1-t_2}\frac{1-\Big(\frac{t_1}{1-t_2}\Big)^{N_1}}{1-\frac{t_1}{1-t_2}}\\
&=\frac{1-\Big(\frac{t_1}{1-t_2}\Big)^{N_1}}{1-t_1-t_2}=\frac{1}{1-\overline{z}\cdot w}\Big(1-\frac{\overline{z}_1 w_1}{1-\overline{z}\cdot w+\overline{z}_1 w_1}\Big)^{N_1}.
\end{split}
\end{equation*}
Now, suppose that (\ref{slab}) holds on $\mathbb{N}^{d-1}$. Again, suppose to re-order the basis $e_1,\dots,e_d$ so that $j=1$. On $\mathbb{N}^d$ we have
\begin{equation*}
\begin{split}
k^X(w,z)&=\sum_{n_1=0}^{N_1}\sum_{n_2=0}^{\infty}\dots\sum_{n_d=0}^{\infty}\binom{(|n|-n_d)+n_d}{n_d}\frac{(n_1+\dots+n_{d-1})!}{n_1!\dots n_{d-1}!}t_1^{n_1}\dots t_d^{n_d}\\
&=\sum_{n_1=0}^{N_1}\sum_{n_2=0}^{\infty}\dots\sum_{n_{d-1}=0}^{\infty}\frac{(n_1+\dots+n_{d-1})!}{n_1!\dots n_{d-1}!}t_1^{n_1}\dots t_{d-1}^{n_{d-1}}\frac{1}{(1-t_d)^{n_1+\dots+n_{d-1}+1}}\\
&=\frac{1}{1-t_d}\sum_{n_1=0}^{N_1}\sum_{n_2=0}^{\infty}\dots\sum_{n_{d-1}=0}^{\infty}\frac{(n_1+\dots+n_{d-1})!}{n_1!\dots n_{d-1}!}\Big(\frac{t_1}{(1-t_d)}\Big)^{n_1}+\dots+\Big(\frac{t_{d-1}}{(1-t_{d-1})}\Big)^{n_{d-1}}\\
&=\frac{1}{1-t_d}\frac{1}{1-\sum_{i=1}^{d-1}\frac{t_i}{(1-t_d)}}\Big(1-\frac{\frac{t_1}{1-t_d}}{1-\sum_{i=2}^{d-1}\frac{t_i}{(1-t_d)}}     \Big)^{N_1}\\
&=\frac{1}{1-\sum_{i=1}^d t_i}\Big(1-\frac{1}{1-\sum_{i=2}^d t_i}   \Big)^{N_1}=\frac{1}{1-\overline{z}\cdot w}\Big(1-\frac{\overline{z}_j w_j}{1-\overline{z}\cdot w+\overline{z}_1 w_1}\Big)^{N_1}.
\end{split}
\end{equation*}
\end{proof}

\bibliography{BiblioProc}{}
\bibliographystyle{plain}

\end{document}